\newtheorem{thm}{Theorem}[section]
\newtheorem{defi}{Definition}[section]
\newtheorem{lem}{Lemma}[section]
\newtheorem{cor}{Corollary}[section]
\newtheorem{prop}{Proposition}[section]
\theoremstyle{definition}
\newtheorem{rem}{Remark}[section]
\newtheorem{exmp}{Example}[section]
\newcommand{\be}{\begin{equation}}
\newcommand{\ee}{\end{equation}}
\newcommand{\bea}{\begin{eqnarray}}
\newcommand{\eea}{\end{eqnarray}}
\newcommand{\beb}{\begin{eqnarray*}}
\newcommand{\eeb}{\end{eqnarray*}}
\numberwithin{equation}{section}
\begin{document}

\title[A study on $\mathcal I$-localized sequences in S-metric spaces]{  A study on $\mathcal I$-localized sequences in S-metric spaces}

\author[A.K.Banerjee]{Amar Kumar Banerjee$^1$ }
\author[N.Hossain]{Nesar Hossain$^2$}

\address{$^{1,2,}$Department of Mathematics, The University of Burdwan, Golapbag, Burdwan - 713104, West Bengal, India.}

\email{$^1$akbanerjee1971@gmail.com, akbanerjee@math.buruniv.ac.in}
\email{$^2$nesarhossain24@gmail.com}

\subjclass[2020]{40A35, 40A30, 54A20, 40A05}
\keywords{Ideal,  $S$-metric space, locator, $\mathcal{I}$-localized sequence, $\mathcal{I^*}$-localized sequence, $\mathcal{I}$-barrier, uniformly $\mathcal{I}$-localized sequence. }

\maketitle

\begin{abstract}
In this paper we study the notion of $\mathcal{I}$-localized and $\mathcal{I^*}$-localized sequences in $S$-metric spaces. Also, we investigate some properties related to $\mathcal{I}$-localized and $\mathcal{I}$-Cauchy sequences and give the idea of  $\mathcal{I}$-barrier of a sequence in the same space.
\end{abstract}

\section{\bf{Introduction }}
After long $50$ years of introducing of the notion of statistical convergence \cite{Fast, Steinhaus, Schoenberg} the idea of $\mathcal{I}$-convergence were given by Kostyrko et al. \cite{Kostyrko Salat Wilczynski} in $2000$ where $\mathcal{I}$ is an ideal of subsets of the set of natural numbers. Then this ideas of ideal convergence were studied by several authors in many directions \cite{Banerjee, Banerjee Banerjee 2018, Banerjee Mondal, Das Kostyrko Wilczynski Malik, Lahiri Das}.

 The notion of localized sequence was introduced by Krivonosov \cite{Krivonosov} in metric spaces in $1974$ as a generalization of a Cauchy sequence. A sequence $\{x_n\}_{n\in\mathbb{N}}$ of points in a metric space $(X,d)$ is said to be localized in some subset $M\subset X$ if the number sequence $\alpha_n=d(x_n , x)$ converges for $x\in M$. The maximal subset of $X$ on which the sequence $\{x_n\}_{n\in\mathbb{N}}$  is localized, is called the locator of $\{x_n\}_{n\in\mathbb{N}}$ and it is denoted by $loc(x_n)$. If $\{x_n\}_{n\in\mathbb{N}}$ is localized on $X$ then it is called localized everywhere in $X$. If the locator of a sequence $\{x_n\}_{n\in\mathbb{N}}$ contains all elements of this sequence, except for a finite number of elements of it then the sequence $\{x_n\}_{n\in\mathbb{N}}$ is called localized in itself. After long years, in $2020$, Nabiev et el. \cite{Nabiev Savas Gurdal}  introduced the idea of $\mathcal{I}$-localized and $\mathcal{I^*}$-localized sequences in metric spaces and investigated some basic properties of the $\mathcal{I}$-localized sequences related with $\mathcal{I}$-Cauchy sequences.  At the same time Gürdal et el. \cite{Gurdal Sari Savas} studied $A$-statistically localized sequences in $n$-normed spaces, Yamanci et el. \cite{Yamanci Nabiev Gurdal} have extended this idea of localized sequences to statistically localized sequences in $2$-normed spaces and interestingly this  notion has been generalized in ideal context in $2$-normed spaces by Yamanci et el. \cite{Yamanci Savas Gurdal}. In $2021$, Granados and Bermudez \cite{Granados Bermudez}  studied on $\mathcal{I}_2$-localized double sequences and Granados \cite{Granados}  nurtured this notion with the help of triple sequences using ideals  in metric spaces.

In $2012$  Sedghi et al. \cite{Sedghi} has defined the interesting notion of $S$-metric spaces and proved some basic properties in this space. For an admissible ideal $\mathcal{I}$,  $\mathcal{I^*}$-convergence and $\mathcal{I^*}$-Cauchy criteria in $X$ imply $\mathcal{I}$-convergence and 
$\mathcal{I}$-Cauchy criteria in $X$ respectively. Moreover, for admissible ideal with the property $(AP)$, $\mathcal{I}$ and $\mathcal{I^*}$-convergence ($\mathcal{I}$ and $\mathcal{I^*}$-Cauchy criteria) in $X$ are equivalent \cite{Banerjee Banerjee 2018}.  In this paper we have studied the notion of $\mathcal{I}$ and $\mathcal{I^*}$-localized sequences and investigated some results related to $\mathcal{I}$-Cauchy sequences in $S$-metric spaces.

\section{\textbf{Preliminaries}}
Now we recall some basic definitions and notations.  If $X$ is a non-empty set then a collection $\mathcal{I}$ of subsets of $X$ is said to be an ideal of $X$ if $(i)$ $A,B\in \mathcal{I} \Rightarrow A\cup B\in \mathcal{I}$ and $(ii)$ $A\in \mathcal{I}, B\subset A \Rightarrow B\in \mathcal{I}$. Clearly $\{\phi\}$ and $2^X$, the power set of $X$, are the trivial ideal of $X$. A non trivial ideal $\mathcal{I}$ is said to be an admissible ideal if $\{x\}\in \mathcal{I}$ for each $x\in X$.  If $\mathcal{I}$ is a non trivial ideal of $X$ then the family of sets $\mathcal{F(I)}= \{A\subset X: X\setminus A\in \mathcal{I}\}$ is clearly a filter on $X$. This filter is called the filter associated with the ideal $\mathcal{I}$. An admissible ideal $\mathcal{I}$ of $\mathbb{N}$, the set of natural numbers, is said to satisfy the condition $(AP)$ if for every countable family $\{A_1,A_2,A_3,\ldots\}$ of sets belonging to $\mathcal{I}$ there exists a countable family of sets $B_1,B_2,B_3,\ldots\}$ such that $A_i\triangle B_i$ is a finite set for each $i\in \mathbb{N}$ and $B=\bigcup_{i\in\mathbb{N}}B_i \in \mathcal{I}$.
Note that $B_i\in \mathcal{I}$ for all $i\in \mathbb{N}$.

Now we  recall some basic definitions and some  properties from (\cite{Sedghi}).
\begin{defi}
Let $X$ be  a non empty set. An $S$-metric on $X$ is a function $S:X\times X\times X\rightarrow [0,\infty)$, such that for each $x,y,z,a\in X$, \\
$(i)$ $S(x,y,z)\geq 0 $;\\
$(ii)$ $S(x,y,z)=0$ if and only if $x=y=z$;\\
$(iii)$ $S(x,y,z)\leq S(x,x,a)+S(y,y,a)+S(z,z,a)$.
\end{defi}
The pair $(X,S)$ is called an $S$-metric space. Several examples may be seen from \cite{Sedghi}. In an $S$-metric space, we have $S(x,x,y)=S(y,y,x)$. A sequence $\{x_n\}_{n\in \mathbb{N}}$ in $(X,S)$ is said to converge to $x$ if and only if $S(x_n,x_n,x)\rightarrow 0$ as $n\rightarrow \infty$. That is for $\varepsilon >0$ there exists $n_0\in \mathbb{N}$ such that for all $n\geq n_0$, $S(x_n,x_n,x)<\varepsilon$. The sequence $\{x_n\}_{n\in \mathbb{N}}$ in $(X,S)$ is called a Cauchy sequence if for each $\varepsilon >0$, there exists $n_0\in \mathbb{N}$ such that $S(x_n,x_n,x_m)<\varepsilon$ for each $n,m\geq n_0$. 

 We recall the following definitions in an $S$-metric space from \cite{Banerjee Banerjee 2018} which will be useful in the sequal.

A sequence $\{x_n\}_{n\in \mathbb{N}}$ of elements of $X$ is said to be $\mathcal{I}$-convergent to $x\in X$ if for each $\varepsilon >0$, the set $A(\varepsilon)= \{n\in \mathbb{N} : S(x_n,x_n,x)\geq \varepsilon \}\in \mathcal{I}$. The  sequence $\{x_n\}_{n\in \mathbb{N}}$ of elements of $X$ is said to be $\mathcal{I^*}$-convergent to $x\in X$ if and only if there exists a set $M\in \mathcal{F(I)}$, $M=\{m_1<m_2<\cdots <m_k<\cdots\}\subset \mathbb{N}$ such that $\lim _{k\to \infty}S(x_{m_k},x_{m_k},x)=0$.

A  sequence $\{x_n\}_{n\in \mathbb{N}}$ of elements of $X$ is called an $\mathcal{I}$-Cauchy sequence if for every $\varepsilon >0$ there exists a positive integer $n_0 = n_0 (\varepsilon)$ such that the set $A(\varepsilon)= \{n\in \mathbb{N}: S(x_n,x_n, x_{n_0})\geq \varepsilon \}\in \mathcal{I}$.
The sequence $\{x_n\}_{n\in \mathbb{N}}$ of elements of $X$ is called an $\mathcal{I^*}$-Cauchy sequence if there exists a set $M=\{m_1<m_2<\ldots<m_k \ldots \}\subset \mathbb{N}, M\in \mathcal{F(I)}$ such that the subsequence $\{x_{m_k}\}$ is an ordinary Cauchy sequence in $X$ i.e.,for each preassigned $\varepsilon >0$ there exists $k_0\in \mathbb{N}$ such that $S(x_{m_k},x_{m_k},x_{m_r})<\varepsilon$ for all $k,r\geq k_0$.


\section{\bf{Main Results}}
Throughout the discussion $\mathbb{N}$ stands for the set of natural numbers, $\mathcal{I}$  for an admissible ideal of $\mathbb{N}$ and  $X$  stands for an $S$-metric space unless other stated. Now we introduce some definitions and properties regarding localized sequences with respect to the ideal $\mathcal{I}$ in $S$-metric spaces. 
\begin{defi}
A sequence $\{x_n\}_{n\in\mathbb{N}}$ in $X$ is said to be localized in the subset $M\subset X$ if for each $x\in M$, the non negative real sequence $\{S(x_n,x_n,x)\}_{n\in \mathbb{N}}$ converges in $X$.
\end{defi}
\begin{defi}\label{I-localized}
A sequence $\{x_n\}_{n\in \mathbb{N}}$ of elements of $X$ is said to be $\mathcal{I}$-localized in the subset $M\subset X$ if for each $x\in M$, $\mathcal{I}\text{-}\lim_{n\to \infty} S(x_n,x_n,x)$ exists i.e. if the non negative real sequence $ \{S(x_n,x_n,x)\}_{n\in\mathbb{N}}$ is $\mathcal{I}$-convergent.
\end{defi}

The maximal subset of $X$  on which a sequence $\{x_n\}_{n\in \mathbb{N}}$ in $X$ is $\mathcal{I}$-localized, is called the $\mathcal{I}$-locator of  $\{x_n\}_{n\in \mathbb{N}}$ and it is  denoted by $loc _{\mathcal{I}} (x_n)$.
A sequence $\{x_n\}_{n\in \mathbb{N}}$ in $X$ is said to be $\mathcal{I}$-localized everywhere if the  $\mathcal{I}$-locator of  $\{x_n\}_{n\in \mathbb{N}}$ is the whole set $X$. 
The sequence $\{x_n\}_{n\in \mathbb{N}}$ is said to be $\mathcal{I}$-localized in itself if the set $\{n \in \mathbb{N}: x_n \in loc _{\mathcal{I}} (x_n)\}\in \mathcal{F(I)}$.

Now we intoduce an important result in $S$-metric spaces which will be useful in the sequal.
\begin{lem}\label{lem0.1}
The inequality $|S(x,x,\xi)-S(\xi,\xi,y)|\leq 2S(x,x,y)$ holds good for any $x,y,\xi\in X$.
\end{lem}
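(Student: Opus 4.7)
The plan is to bound the two quantities $S(x,x,\xi)$ and $S(\xi,\xi,y)$ against each other using axiom (iii) of the $S$-metric applied twice, with the free parameter $a$ chosen strategically, and then use the symmetry identity $S(u,u,v)=S(v,v,u)$ to put everything in a uniform form.

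First, I would apply (iii) to the triple $(x,x,\xi)$ with the free point $a=y$. This gives
\[
S(x,x,\xi)\le S(x,x,y)+S(x,x,y)+S(\xi,\xi,y)=2S(x,x,y)+S(\xi,\xi,y),
\]
so that $S(x,x,\xi)-S(\xi,\xi,y)\le 2S(x,x,y)$. This takes care of one side of the absolute value.

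Next, for the reverse inequality, I would apply (iii) to the triple $(y,y,\xi)$ with $a=x$, obtaining
\[
S(y,y,\xi)\le 2S(y,y,x)+S(\xi,\xi,x).
\]
Using the symmetry $S(y,y,\xi)=S(\xi,\xi,y)$, $S(y,y,x)=S(x,x,y)$, and $S(\xi,\xi,x)=S(x,x,\xi)$, this rewrites as $S(\xi,\xi,y)\le 2S(x,x,y)+S(x,x,\xi)$, hence $S(\xi,\xi,y)-S(x,x,\xi)\le 2S(x,x,y)$.

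Combining the two estimates yields the claim. There is no real obstacle here; the only point that requires a moment's thought is the correct choice of the free parameter $a$ and the right permutation of arguments in (iii) so that $S(x,x,y)$ appears with coefficient $2$ and the remaining term is precisely the other quantity in the absolute value. The symmetry $S(u,u,v)=S(v,v,u)$ stated just after the definition of an $S$-metric is essential in the second step.
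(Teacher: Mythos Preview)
Your proof is correct and follows essentially the same route as the paper: both apply axiom (iii) to $(x,x,\xi)$ with $a=y$ for one direction and to $(y,y,\xi)$ with $a=x$ for the other, invoking the symmetry $S(u,u,v)=S(v,v,u)$ to rewrite terms. The only cosmetic difference is the order in which symmetry is applied in the second step.
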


\begin{proof}
Now for $x,y,\xi\in X$, we have \begin{align*}
    S(x,x,\xi)
&\leq S(x,x,y)+S(x,x,y)+S(\xi,\xi,y)\\
&=2S(x,x,y)+S(\xi,\xi,y)
\end{align*}
Therefore \begin{equation}\label{eqn3.1}
    S(x,x,\xi)-S(\xi,\xi,y)\leq 2S(x,x,y)
\end{equation}
Again we have, \begin{align*}
    &S(\xi,\xi,y)-S(x,x,\xi)\\ 
    = &S(y,y,\xi)-S(x,x,\xi)\\
\leq  &S(y,y,x)+S(y,y,x)+S(\xi,\xi,x)-S(x,x,\xi)\\
= &S(x,x,y)+S(x,x,y)+S(x,x,\xi)-S(x,x,\xi)\\
= &2S(x,x,y)
\end{align*}
Therefore \begin{equation}\label{eqn3.2}
  S(\xi,\xi,y)-S(x,x,\xi)\leq 2S(x,x,y) 
\end{equation}
From the equations  (\ref{eqn3.1}) and (\ref{eqn3.2}) we have $|S(x,x,\xi)-S(\xi,\xi,y)|\leq 2S(x,x,y)$. This completes the proof.
\end{proof}

\begin{lem}\label{lem 3.1}
 If $\{x_n\}_{n\in \mathbb{N}}$ is an $\mathcal{I}$-Cauchy sequence in $X$ then it is $\mathcal{I}$-localized everywhere. 
 \end{lem}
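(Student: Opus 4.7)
The plan is to fix an arbitrary point $x\in X$ and argue that the non-negative real sequence $a_n := S(x_n,x_n,x)$ is $\mathcal{I}$-convergent; since $x$ is arbitrary, this will give $\mathcal{I}$-localization of $\{x_n\}$ on the whole of $X$, which is exactly what the statement asserts.

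The first step is to exploit Lemma \ref{lem0.1} with the substitutions $x\mapsto x_n$, $\xi\mapsto x$, $y\mapsto x_m$ (together with the symmetry relation $S(u,u,v)=S(v,v,u)$ noted after the definition of $S$-metric space) to obtain the pointwise estimate
\begin{equation*}
|S(x_n,x_n,x) - S(x_m,x_m,x)|\;\le\; 2\,S(x_n,x_n,x_m) \qquad \text{for every } n,m\in\mathbb{N}.
\end{equation*}
In other words, the oscillation of the real sequence $\{a_n\}$ is controlled, pairwise, by the $S$-distances between the corresponding terms of $\{x_n\}$.

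The second step is to use the $\mathcal{I}$-Cauchy hypothesis on $\{x_n\}$ to upgrade this to the assertion that $\{a_n\}$ is $\mathcal{I}$-Cauchy as a real sequence. Given $\varepsilon>0$, I would choose $n_0\in\mathbb{N}$ so that $\{n\in\mathbb{N}:S(x_n,x_n,x_{n_0})\ge \varepsilon/2\}\in\mathcal{I}$; the displayed inequality then yields
\begin{equation*}
\{n\in\mathbb{N}:|a_n-a_{n_0}|\ge \varepsilon\}\;\subseteq\;\{n\in\mathbb{N}:S(x_n,x_n,x_{n_0})\ge \varepsilon/2\}\in\mathcal{I},
\end{equation*}
so the left-hand set lies in $\mathcal{I}$ by the hereditary property of an ideal.

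To finish, I would invoke the standard fact that in the complete space $\mathbb{R}$, every $\mathcal{I}$-Cauchy sequence (for an admissible ideal) is $\mathcal{I}$-convergent, which promotes the conclusion of the previous paragraph to $\mathcal{I}$-convergence of $\{a_n\}$. I expect the only delicate point to be this final reduction; if one prefers not to quote it, a short self-contained argument is available: pick $n_k$ with $B_k:=\{n:|a_n-a_{n_k}|\ge 1/k\}\in\mathcal{I}$, observe that for any $k,\ell$ the set $(\mathbb{N}\setminus B_k)\cap(\mathbb{N}\setminus B_\ell)\in\mathcal{F}(\mathcal{I})$ is non-empty (since $\mathcal{I}$ is non-trivial), whence picking any $n$ in this intersection gives $|a_{n_k}-a_{n_\ell}|<1/k+1/\ell$; thus $\{a_{n_k}\}$ is an ordinary Cauchy sequence in $\mathbb{R}$ converging to some $L$, and a routine $\varepsilon/2$-argument shows $\{n:|a_n-L|\ge \varepsilon\}\subseteq B_k\in\mathcal{I}$ for $1/k<\varepsilon/2$, so $L$ is the $\mathcal{I}$-limit of $\{a_n\}$.
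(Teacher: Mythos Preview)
Your proof is correct and follows essentially the same route as the paper: both apply Lemma~\ref{lem0.1} to bound $|S(x_n,x_n,\xi)-S(x_{n_0},x_{n_0},\xi)|$ by $2S(x_n,x_n,x_{n_0})$, then use the $\mathcal{I}$-Cauchy hypothesis to place the relevant set in $\mathcal{I}$. You are in fact more careful than the paper on one point: the paper simply asserts that this makes $\{S(x_n,x_n,\xi)\}$ $\mathcal{I}$-convergent, whereas you correctly observe that what has been shown is only the $\mathcal{I}$-Cauchy condition for the real sequence, and you supply (or cite) the missing step that $\mathcal{I}$-Cauchy implies $\mathcal{I}$-convergent in $\mathbb{R}$.
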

 \begin{proof}
 By the condition, for every $\varepsilon>0$ there exists a positive integer $n_0=n_0(\varepsilon)$ such that   the set $A(\varepsilon)= \{n\in \mathbb{N}: S(x_n,x_n, x_{n_0})\geq \frac{\varepsilon}{2} \}\in \mathcal{I}$. Let $\xi \in X$. Using the Lemma \ref{lem0.1}, we have
  $|S(x_n,x_n,\xi)-S(\xi , \xi , x_{n_0})|\leq 2S(x_n , x_n , x_{n_0})$. Therefore $\{n\in \mathbb{N}:|S(x_n,x_n,\xi)-S(\xi , \xi , x_{n_0})|\geq \varepsilon\}\subset \{n\in \mathbb{N}: S(x_n , x_n , x_{n_0}) \geq \frac{\varepsilon}{2}\}\in \mathcal{I}$. This shows that the number sequence $\{S(x_n,x_n,\xi)\}_{n\in\mathbb{N}}$ is $\mathcal{I}$-convergent for each $\xi\in X$. Hence the sequence $\{x_n\}_{n\in \mathbb{N}}$ is $\mathcal{I}$-localized everywhere.
\end{proof}

\begin{cor}
By the Lemma \ref{lem 3.1} it follows that 
 every $\mathcal{I}$-convergent sequence in $X$ is $\mathcal{I}$-localized everywhere.
\end{cor}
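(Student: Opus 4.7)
The plan is to deduce the corollary from Lemma \ref{lem 3.1} by first showing that any $\mathcal{I}$-convergent sequence in $X$ is $\mathcal{I}$-Cauchy; once this implication is in place, Lemma \ref{lem 3.1} immediately furnishes the conclusion.

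To verify the $\mathcal{I}$-Cauchy property, I would suppose $\{x_n\}_{n\in\mathbb{N}}$ is $\mathcal{I}$-convergent to some $x \in X$ and fix $\varepsilon > 0$. Set $A = \{n \in \mathbb{N} : S(x_n, x_n, x) \geq \varepsilon/3\}$; by hypothesis $A \in \mathcal{I}$, and since $\mathcal{I}$ is non-trivial, $\mathbb{N}\setminus A$ is non-empty, so I can select some $n_0 \in \mathbb{N}\setminus A$. Applying the $S$-metric triangle inequality (axiom $(iii)$ with anchor $a = x$) together with the symmetry $S(x_{n_0}, x_{n_0}, x) = S(x, x, x_{n_0})$ gives
\[
S(x_n, x_n, x_{n_0}) \leq 2\, S(x_n, x_n, x) + S(x_{n_0}, x_{n_0}, x)
\]
for every $n$. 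For $n \notin A$ the right-hand side is strictly less than $\varepsilon/3 + \varepsilon/3 + \varepsilon/3 = \varepsilon$, hence $\{n \in \mathbb{N} : S(x_n, x_n, x_{n_0}) \geq \varepsilon\} \subseteq A \in \mathcal{I}$, which is precisely the $\mathcal{I}$-Cauchy condition with this choice of $n_0$. Lemma \ref{lem 3.1} then shows $\{x_n\}$ is $\mathcal{I}$-localized everywhere.

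There is no substantive obstacle; the only delicate point is the factor of $2$ appearing in axiom $(iii)$ of an $S$-metric, which forces the $\varepsilon/3$-splitting in place of the more familiar $\varepsilon/2$-splitting used in ordinary metric spaces. As a side remark, one could bypass Lemma \ref{lem 3.1} entirely and argue directly via Lemma \ref{lem0.1}: for any $\xi \in X$, the inequality $|S(x_n, x_n, \xi) - S(\xi, \xi, x)| \leq 2\, S(x_n, x_n, x)$ forces $\{S(x_n, x_n, \xi)\}_{n\in\mathbb{N}}$ to be $\mathcal{I}$-convergent (to $S(\xi, \xi, x)$), so $\xi \in loc_{\mathcal{I}}(x_n)$; however, the author's phrasing ``By the Lemma \ref{lem 3.1}'' clearly indicates the intended route goes through the $\mathcal{I}$-Cauchy reduction above.
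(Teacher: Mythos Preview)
Your proposal is correct and matches the paper's intended route: the paper offers no separate proof for the corollary beyond the phrase ``By the Lemma \ref{lem 3.1} it follows,'' implicitly invoking the standard fact that $\mathcal{I}$-convergence implies $\mathcal{I}$-Cauchy (established in \cite{Banerjee Banerjee 2018}), and you have simply spelled out that implication in the $S$-metric setting before applying Lemma \ref{lem 3.1}. Your side remark about the direct route via Lemma \ref{lem0.1} is also valid and worth noting.
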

Also,  if $\mathcal{I}$ is an admissible ideal, then every localized sequence in $X$ is $\mathcal{I}$-localized sequence in 
$X$.

\begin{defi}\label{defi2}
A sequence $\{x_n\}_{n\in \mathbb{N}}$ is said to be $\mathcal{I^*}$-localized in $X$ if the real sequence $\{S(x_n,x_n,x)\}_{n\in\mathbb{N}}$ is $\mathcal{I^*}$-convergent for each $x\in X$.
\end{defi}

\begin{thm}\label{thm3.1}
Let $\mathcal{I}$ be an admissible ideal. If a sequence $\{x_n\}_{n\in \mathbb{N}}$ in $X$ is $\mathcal{I^*}$-localized on the subset $M\subset X$, then $\{x_n\}_{n\in \mathbb{N}}$ is $\mathcal{I}$-localized on the set $M$ and $loc _{\mathcal{I^*}} (x_n)\subset loc _{\mathcal{I}} (x_n) $.
\end{thm}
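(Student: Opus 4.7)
The plan is to reduce the statement to the standard fact that for an admissible ideal $\mathcal{I}$, the $\mathcal{I}^*$-convergence of a real number sequence implies its $\mathcal{I}$-convergence. Since $\mathcal{I}^*$-localization and $\mathcal{I}$-localization are defined purely through the convergence behavior of the real sequence $\{S(x_n,x_n,x)\}_{n\in\mathbb{N}}$, once this reduction is in place everything should fall into line.

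First I would fix an arbitrary $x\in M$. By the hypothesis that $\{x_n\}$ is $\mathcal{I}^*$-localized on $M$, the real sequence $\{S(x_n,x_n,x)\}_{n\in\mathbb{N}}$ is $\mathcal{I}^*$-convergent; unwinding the definition recalled in the preliminaries, there exist a set $H=\{m_1<m_2<\cdots\}\in\mathcal{F}(\mathcal{I})$ and a real number $\alpha_x$ such that $\lim_{k\to\infty}S(x_{m_k},x_{m_k},x)=\alpha_x$ in the ordinary sense. The candidate for the $\mathcal{I}$-limit of $\{S(x_n,x_n,x)\}_{n\in\mathbb{N}}$ is of course this $\alpha_x$.

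Next, for a given $\varepsilon>0$, the ordinary convergence along $H$ supplies some $k_0\in\mathbb{N}$ with $|S(x_{m_k},x_{m_k},x)-\alpha_x|<\varepsilon$ for every $k\ge k_0$. I would then observe the inclusion
\[
\bigl\{n\in\mathbb{N}:|S(x_n,x_n,x)-\alpha_x|\ge\varepsilon\bigr\}\subset (\mathbb{N}\setminus H)\cup\{m_1,m_2,\ldots,m_{k_0-1}\}.
\]
The right-hand side lies in $\mathcal{I}$ because $\mathbb{N}\setminus H\in\mathcal{I}$ (as $H\in\mathcal{F}(\mathcal{I})$) and the finite tail $\{m_1,\ldots,m_{k_0-1}\}$ is in $\mathcal{I}$ by admissibility; the hereditary and additive properties of the ideal then give the membership. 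This is precisely the condition that $\{S(x_n,x_n,x)\}_{n\in\mathbb{N}}$ is $\mathcal{I}$-convergent, so $\{x_n\}$ is $\mathcal{I}$-localized on $M$.

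Finally, the locator inclusion $loc_{\mathcal{I}^*}(x_n)\subset loc_{\mathcal{I}}(x_n)$ is obtained by applying the statement just proved with $M=loc_{\mathcal{I}^*}(x_n)$: the sequence is $\mathcal{I}$-localized on this set, and by maximality of $loc_{\mathcal{I}}(x_n)$ this set must be contained in the $\mathcal{I}$-locator. I do not anticipate a genuine obstacle; the only point that requires care is keeping the admissibility of $\mathcal{I}$ visible, since it is exactly what absorbs the finite initial segment $\{m_1,\ldots,m_{k_0-1}\}$ into the ideal and makes the implication $\mathcal{I}^*\Rightarrow\mathcal{I}$ work.
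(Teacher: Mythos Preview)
Your proposal is correct and follows essentially the same route as the paper: both reduce the claim to the standard fact that, for an admissible ideal, $\mathcal{I}^*$-convergence of the real sequence $\{S(x_n,x_n,x)\}$ implies its $\mathcal{I}$-convergence. The paper simply invokes this implication in one sentence, whereas you spell out the details (the set inclusion and the role of admissibility) and also make the locator inclusion explicit via maximality; these are elaborations rather than a different approach.
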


\begin{proof}
Let $\{x_n\}_{n\in \mathbb{N}}$ be $\mathcal{I^*}$-localized on the subset $M\subset X$. Then, by the definition \ref{defi2},  the number sequence $\{S(x_n,x_n,x)\}_{n\in\mathbb{N}}$ is $\mathcal{I^*}$-convergent for each $x\in M$. Now since $\mathcal{I}$ is an admissible ideal,  the number sequence $\{S(x_n,x_n,x)\}_{n\in\mathbb{N}}$ is $\mathcal{I}$-convergent for each $x\in M$ which gives  $\{x_n\}_{n\in \mathbb{N}}$ is $\mathcal{I}$-localized on the set $M$.
\end{proof}
But the converse of the Theorem \ref{thm3.1} does not hold in general. It can be shown by the following example.

\begin{exmp}
First we define the $S$-metric on $\mathbb{R}$ by $S(x,y,z)=d(x,z)+d(y,z)$, $\forall \ x,y,z \in \mathbb{R}$ where $d$ is the usual metric on $\mathbb{R}$. Let $\mathbb{N}= \bigcup _{j=1} ^{\infty} \triangle _j$ be a decomposition of $\mathbb{N}$ such that each $\triangle_j$ is infinite and obviously $\triangle_i\cap \triangle_j=\phi$ for $i\neq j$. Let $\mathcal{I}$ be the class of all those subsets of $ \mathbb{N}$ which intersects only a finite number of $\triangle_j 's$. Then $\mathcal{I}$ is an admissible ideal on $\mathbb{N}$. Let $\{x_n\}_{n\in\mathbb{N}}$ be a sequence in $(\mathbb{R},S)$ defined by $x_n=\frac{1}{j}$, for $n\in \triangle_j$. Let $\varepsilon>0$ be given. Now since the sequence $\{\frac{1}{j}\}_{j\in\mathbb{N}}$ in $(\mathbb{R},d)$ converges to zero, so there exists $p\in \mathbb{N}$ such that $d(\frac{1}{j},0)<\frac{\varepsilon}{4}$ for all $j\geq p$. Now \begin{equation}\label{eqn1}
S(x_n,x_n,0)=d(x_n,0)+d(x_n,0)=d(\frac{1}{j},0)+d(\frac{1}{j},0)<\frac{\varepsilon}{4}+\frac{\varepsilon}{4}=\frac{\varepsilon}{2},\ \text{for all}\ j\geq p.
\end{equation} Let $x\in \mathbb{R}$. Now using the Lemma \ref{lem0.1} and the equation (\ref{eqn1}), we have \begin{equation*}
   |S(x_n,x_n,x)-S(x,x,0)|\leq 2S(x_n,x_n,0)<\varepsilon,\ \text{ for all}\ j\geq p. 
\end{equation*} Hence $\{n\in\mathbb{N}: |S(x_n,x_n,x)-S(x,x,0)|\geq \varepsilon\}\subseteq \triangle_1\cup\triangle_2\cup\cdots\cup\triangle_p\in\mathcal{I}$. Therefore $\{n\in\mathbb{N}: |S(x_n,x_n,x)-S(x,x,0)|\geq \varepsilon\}\in\mathcal{I}$. Hence for each $x\in \mathbb{R}$, the number sequence $\{S(x_n,x_n,x)\}_{n\in\mathbb{N}}$ is $\mathcal{I}$-convergent. Therefore  the sequence $\{x_n\}_{n\in\mathbb{N}}$ is $\mathcal{I}$-localized in $(\mathbb{R},S)$ 

Now we show that the sequence $\{x_n\}_{n\in\mathbb{N}}$ is not  $\mathcal{I}^*$-localized in $(\mathbb{R},S)$. If possible, let 
 the sequence $\{x_n\}_{n\in\mathbb{N}}$ be $\mathcal{I}^*$-localized in $(\mathbb{R},S)$. So the number sequence $\{S(x_n,x_n,x)\}_{n\in\mathbb{N}}$ is $\mathcal{I}^*$-convergent for each $x\in \mathbb{R}$. So there exists $A\in \mathcal{I}$ such that, for $M=\mathbb{N}\setminus A=\{m_1<m_2<\cdots<m_k<\cdots\}\in\mathcal{F(I)}$, the subsequence $\{S(x_n,x_n,x)\}_{n\in M}$ is convergent. Now by the definition of $\mathcal{I}$, there is a positive integer $t$ such that $A\subseteq \triangle_1\cup\triangle_2\cup\ldots\cup\triangle_t$. But then $\triangle_{i}\subset \mathbb{N}\setminus A=M$ for all $i\geq t+1$. In particular $\triangle_{t+1}$, $\triangle_{t+2}\subset M$. Since $\triangle_j's$ are infinite, there are infinitely many $k's$ for which $x_{m_k}=\frac{1}{t+1}$ when $m_k\in \triangle_{t+1}$ and $x_{m_k}=\frac{1}{t+2}$ when $m_k\in \triangle_{t+2}$. So \begin{equation*}
     S(x_{m_k},x_{m_k},0)=\begin{cases}
     d(\frac{1}{t+1},0)+d(\frac{1}{t+1},0)=\frac{2}{t+1} \ \text{when}\ m_k\in \triangle_{t+1}\\
     d(\frac{1}{t+2},0)+d(\frac{1}{t+2},0)=\frac{2}{t+2} \ \text{when}\ m_k\in\triangle_{t+2}
     \end{cases}.
 \end{equation*}
 So for $0\in\mathbb{R}$ there are infinitely many terms of the form $\frac{2}{t+1}$ and $\frac{2}{t+2}$. So $\{S(x_{m_k},x_{m_k},0)\}$ can not be convergent which leads to a contradiction. Hence the sequence $\{x_n\}_{n\in\mathbb{N}}$ can not be $\mathcal{I^*}$-localized.
\end{exmp}
\begin{rem}
 If $X$ has no limit point, then it follows that $\mathcal{I}$-convergence and $\mathcal{I^*}$-convergence coincide. Therefore by the Definitions \ref{I-localized} and \ref{defi2} and by the Theorem \ref{thm3.1} we have $loc _{\mathcal{I}} (x_n)=loc _{\mathcal{I^*}} (x_n)$. 
Also, if $X$ has a limit point $\xi$ then there is an admissible ideal $\mathcal{I}$ for which there exists an $\mathcal{I}$-localized sequence $\{y_n\}_{n\in \mathbb{N}}$ in $X$ but $\{y_n\}_{n\in \mathbb{N}}$ is not $\mathcal{I^*}$-localized.
\end{rem}

Now we shall formulate the necessary and sufficient condition for the ideal $\mathcal{I}$ under which $\mathcal{I}$ and $\mathcal{I^*}$-localized sequences are equivalent.

\begin{thm}
$(i)$ If $\mathcal{I}$ satisfies the condition $(AP)$ and $\{x_n\}_{n\in \mathbb{N}}$ is an $\mathcal{I}$-localized on the set $M\subset X$ then it is $\mathcal{I^*}$-localized on $M$.\\
$(ii)$ If $X$ has a limit point and every $\mathcal{I}$-localized sequence implies $\mathcal{I^*}$-localized then $\mathcal{I}$ will have the property $(AP)$. 
\end{thm}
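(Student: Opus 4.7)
For part (i), my plan is a direct reduction to the real-line analogue. By Definition \ref{I-localized}, the hypothesis says that for each $x\in M$ the non-negative real sequence $\{S(x_n,x_n,x)\}_{n\in\mathbb{N}}$ is $\mathcal{I}$-convergent. Since $\mathcal{I}$ has property $(AP)$, the standard fact quoted in the Introduction from \cite{Banerjee Banerjee 2018}, that under $(AP)$ an $\mathcal{I}$-convergent sequence is $\mathcal{I}^*$-convergent, applies to each such real sequence. So $\{S(x_n,x_n,x)\}$ is $\mathcal{I}^*$-convergent for every $x\in M$, which is exactly $\mathcal{I}^*$-localization on $M$ by Definition \ref{defi2}.

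For part (ii), I would construct a witnessing sequence that forces the conclusion. Fix a limit point $\xi$ of $X$, choose $y_i\in X\setminus\{\xi\}$ with $y_i\to\xi$, and by thinning assume $c_i:=S(y_i,y_i,\xi)$ is strictly decreasing to $0$. Given an arbitrary countable family $\{A_i\}\subset \mathcal{I}$, pass to the increasing family $\tilde A_i := A_1\cup\cdots\cup A_i\in\mathcal{I}$ with differences $D_i:=\tilde A_i\setminus \tilde A_{i-1}$ (take $\tilde A_0=\emptyset$). Define
\[
x_n=\begin{cases} y_i, & n\in D_i,\\ \xi, & n\notin\bigcup_{i}D_i. \end{cases}
\]

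The next step is to verify that $\{x_n\}$ is $\mathcal{I}$-localized everywhere. At $\xi$, the set $\{n:S(x_n,x_n,\xi)\geq \varepsilon\}$ is a finite union of those $D_i$'s with $c_i\geq \varepsilon$ and so equals some $\tilde A_k\in \mathcal{I}$. At a general $x\in X$, Lemma \ref{lem0.1} supplies the bound $|S(x_n,x_n,x)-S(\xi,\xi,x)|\leq 2\,S(x_n,x_n,\xi)$, which transfers the $\mathcal{I}$-convergence from $\xi$ to $x$ (with $\mathcal{I}$-limit $S(\xi,\xi,x)$). The hypothesis now forces $\{x_n\}$ to be $\mathcal{I}^*$-localized, so taking $x=\xi$ produces $M\in\mathcal{F(I)}$ with $\{S(x_n,x_n,\xi)\}_{n\in M}$ convergent (necessarily to $0$). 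For each fixed $i$, choosing any $\varepsilon<c_i$ shows that $\{n\in M:S(x_n,x_n,\xi)\geq \varepsilon\}$ is finite and contains $M\cap \tilde A_i$, so $M\cap A_i\subset M\cap\tilde A_i$ is finite. Setting $B_i:=A_i\setminus M$ gives $A_i\triangle B_i=A_i\cap M$ finite and $\bigcup_i B_i\subseteq \mathbb{N}\setminus M\in\mathcal{I}$, which is precisely $(AP)$.

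The main obstacle I anticipate is ensuring the constructed sequence is $\mathcal{I}$-localized at every $x\in X$, not merely at the limit point $\xi$; this is exactly where Lemma \ref{lem0.1} is indispensable, as it controls deviations at an arbitrary point by deviations at $\xi$. A secondary care-point is the reduction from the arbitrary countable family $\{A_i\}$ to its monotonisation $\{\tilde A_i\}$, needed so that the finiteness conclusions drawn from the $\mathcal{I}^*$-localized subsequence at $\xi$ transfer cleanly back to the original family.
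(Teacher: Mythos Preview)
Your proposal is correct in both parts.

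For part (i) you take a shortcut: you invoke the known fact (recorded in the Introduction, via \cite{Banerjee Banerjee 2018} and ultimately \cite{Kostyrko Salat Wilczynski}) that under $(AP)$ every $\mathcal{I}$-convergent sequence is $\mathcal{I}^*$-convergent, and apply it to each real sequence $\{S(x_n,x_n,x)\}$. The paper instead reproves this implication from scratch for the particular real sequence $\{S(x_n,x_n,x)\}$, building the sets $A_k=\{n:\frac{1}{k}\le |S(x_n,x_n,x)-\beta|<\frac{1}{k-1}\}$, applying $(AP)$ to obtain $B=\bigcup B_j\in\mathcal{I}$, and checking convergence along $\mathbb{N}\setminus B$. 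The content is identical; your version is simply more economical, while the paper's is self-contained.

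For part (ii) the situation is reversed: the paper omits the argument entirely, pointing to Theorem~3.2 of \cite{Kostyrko Salat Wilczynski}, whereas you supply a complete construction. Your argument is precisely the $S$-metric adaptation of that classical proof: define the witness sequence on the disjointed pieces $D_i$ of the monotonised family $\{\tilde A_i\}$, use Lemma~\ref{lem0.1} to propagate $\mathcal{I}$-localization from $\xi$ to an arbitrary $x\in X$, and then read off the $(AP)$ family $B_i=A_i\setminus M$ from the $\mathcal{I}^*$-convergent subsequence at $\xi$. The step you flagged as the main obstacle---extending localization from $\xi$ to all of $X$---is indeed the one place where the $S$-metric setting requires extra care, and Lemma~\ref{lem0.1} handles it exactly as you say. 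The justification that the limit along $M$ is $0$ (``necessarily to $0$'') follows since $\mathcal{I}^*$-convergence implies $\mathcal{I}$-convergence to the same limit, and you have already shown the $\mathcal{I}$-limit at $\xi$ is $0$.
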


\begin{proof}
$(i)$ Suppose that $\mathcal{I}$ satisfies the condition $(AP)$ and $\{x_n\}_{n\in \mathbb{N}}$ is an $\mathcal{I}$-localized on the set $L\subset X$. Then, by the definition, the number sequence $\{S(x_n,x_n,x)\}_{n\in\mathbb{N}}$ is $\mathcal{I}$-convergent for $x\in L$. Let $\{S(x_n,x_n,x)\}_{n\in\mathbb{N}}$ be $\mathcal{I}$-convergent to $\beta=\beta(x) \in \mathbb{R}^+$. Then for each $\varepsilon >0$ the set $A(\varepsilon)= \{n\in \mathbb{N}: |S(x_n,x_n,x)-\beta|\geq \varepsilon \}\in \mathcal{I}$. Now suppose $A_1= \{n\in \mathbb{N}: |S(x_n,x_n,x)-\beta|\geq 1 \}$ and $A_k= \{n\in \mathbb{N}: \frac{1}{k}\leq|S(x_n,x_n,x)-\beta|< \frac{1}{k-1} \}$ for $k\geq 2, k\in \mathbb{N}$. Obviously $A_i \cap A_j=\phi$, for $i\neq j$. Since $\mathcal{I}$ satisfies the condition $(AP)$, there exists a countable family of sets $\{B_1,B_2,\cdots\}$ such that $A_j\triangle B_j$ is finite for $j\in \mathbb{N}$ and $B=\bigcup_{j=1}^{\infty}B_j \in \mathcal{I}$. Now we shall show that the sequence $\{x_n\}_{n\in \mathbb{N}}$ is $\mathcal{I^*}$-localized. By the definition, it is enough to prove that the number sequence $\{S(x_n,x_n,x)\}_{n\in\mathbb{N}}$ is $\mathcal{I^*}$-convergent for every $x\in L$. We show  for $\mathbb{N}\setminus B=M=\{m_1<m_2<\cdots<m_k<\cdots\}\in \mathcal{F(I)}$, $\lim_{n\to \infty, n\in M}S(x_n,x_n,x)=\beta$. Let $\theta>0$ and $k\in \mathbb{N}$ be such that $\frac{1}{k+1}<\theta$. Then $\{n\in \mathbb{N}: |S(x_n,x_n,x)-\beta|\geq \theta\}\subset \bigcup_{j=1}^{k+1}A_j$. Since $A_j\triangle B_j$, $j=1,2,\cdots k+1$, is finite, we have an $n_0\in \mathbb{N}$ such that $(\bigcup_{j=1}^{k+1}B_j)\cap \{n\in \mathbb{N}:n>n_0\}=(\bigcup_{j=1}^{k+1}A_j)\cap \{n\in \mathbb{N}: n>n_0\}$. If $n>n_0$ and $n\notin B$, then $n\notin \bigcup_{j=1}^{k+1}B_j$ and so  $n\notin \bigcup_{j=1}^{k+1}A_j$. But then $|S(x_n,x_n,x)-\beta|<\frac{1}{k+1}<\theta$. Thus the number sequence $\{S(x_n,x_n,x)\}_{n\in\mathbb{N}}$, $x\in L$, is $\mathcal{I^*}$-convergent. Therefore the sequence $\{x_n\}_{n\in \mathbb{N}}$ is $\mathcal{I^*}$-localized.\\
$(ii)$ The proof  is parallel to the Theorem 3.2 of \cite{Kostyrko Salat Wilczynski}). So it is omitted.
\end{proof}

\begin{defi}
Let $\{x_n\}_{n\in\mathbb{N}}$ be a sequence in $X$. Then $\{x_n\}_{n\in\mathbb{N}}$ is said to be $\mathcal{I}$-bounded  if there exists $x\in X$ and $G>0$ such that the set $\{n\in\mathbb{N}:S(x_n,x_n,x)>G\}\in\mathcal{I}$.
\end{defi}

\begin{prop}
Every $\mathcal{I}$-localized sequence is $\mathcal{I}$-bounded.
\end{prop}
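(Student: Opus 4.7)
The plan is to extract an $\mathcal{I}$-bounded witness directly from the definition of an $\mathcal{I}$-localized sequence, using the $\mathcal{I}$-limit at a single point of the locator. The definition of $\mathcal{I}$-localized requires the locator $\mathrm{loc}_{\mathcal{I}}(x_n)$ to be nonempty (the sequence is localized on some subset $M \subset X$), so I would begin by fixing an arbitrary point $x \in M$.

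Next, by Definition \ref{I-localized}, the nonnegative real sequence $\{S(x_n,x_n,x)\}_{n\in\mathbb{N}}$ is $\mathcal{I}$-convergent. Let $L = \mathcal{I}\text{-}\lim_{n\to\infty} S(x_n,x_n,x)$, which satisfies $L \geq 0$. Taking $\varepsilon = 1$ in the definition of $\mathcal{I}$-convergence of real numbers, the set
\begin{equation*}
E = \{n \in \mathbb{N} : |S(x_n,x_n,x) - L| \geq 1\} \in \mathcal{I}.
\end{equation*}

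Setting $G = L + 1 > 0$, I would observe that whenever $S(x_n,x_n,x) > G$ we have $S(x_n,x_n,x) - L > 1$, so $n \in E$. Hence $\{n \in \mathbb{N} : S(x_n,x_n,x) > G\} \subset E$, and by the hereditary property of the ideal $\mathcal{I}$, this set also lies in $\mathcal{I}$. This exhibits the point $x \in X$ and the constant $G > 0$ required by the definition of $\mathcal{I}$-boundedness, completing the proof.

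I do not anticipate any genuine obstacle here; the argument is a direct unpacking of definitions combined with the trivial fact that an $\mathcal{I}$-convergent real sequence is $\mathcal{I}$-bounded. The only subtle point worth flagging is the nonemptiness of the locator, which is tacit in the hypothesis ``$\mathcal{I}$-localized,'' since the sequence is by assumption localized on some subset $M \subset X$ from which the witness $x$ is drawn.
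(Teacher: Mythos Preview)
Your proof is correct and follows essentially the same approach as the paper: fix a point $x$ in the locator, use the $\mathcal{I}$-convergence of the real sequence $\{S(x_n,x_n,x)\}$ to a limit $L$, and observe that $\{n : S(x_n,x_n,x) > L + \varepsilon\}$ is contained in the $\mathcal{I}$-small set $\{n : |S(x_n,x_n,x) - L| \geq \varepsilon\}$. The only cosmetic difference is that you fix $\varepsilon = 1$ explicitly, whereas the paper works with an arbitrary $G>0$ and obtains the bound $\alpha + G$; your explicit mention of the nonemptiness of the locator is also a nice touch the paper leaves implicit.
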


\begin{proof}
Let $\{x_n\}_{n\in\mathbb{N}}$ be  $\mathcal{I}$-localized on a subset $M\subset X$. Then the number sequence $ \{S(x_n,x_n,\xi)\}_{n\in\mathbb{N}}$ is $\mathcal{I}$-convergent for every $\xi \in M$. Let $ \{S(x_n,x_n,\xi)\}_{n\in\mathbb{N}}$ converges to $\alpha=\alpha(\xi)\in \mathbb{R}$. Let $G>0$ be given. Then  $\{n\in \mathbb{N}: |S(x_n,x_n,\xi)-\alpha| >G \}\in \mathcal{I} $.  This implies that $\{n\in \mathbb{N}: S(x_n,x_n,\xi)-\alpha >G \}\cup \{n\in \mathbb{N}: S(x_n,x_n,\xi)-\alpha <-G \}\in\mathcal{I}$. Therefore $\{n\in \mathbb{N}: S(x_n,x_n,\xi)>\alpha+G \}\in\mathcal{I}$, which shows that the sequence $\{x_n\}_{n\in\mathbb{N}}$ is $\mathcal{I}$-bounded.
\end{proof}

\begin{thm}\label{thm4}
Let $\mathcal{I}$ be an admissible ideal with the condition $(AP)$ and $L=loc_{\mathcal{I}}(x_n)$ and let  $z\in X$ be a point such that for any $\varepsilon >0$ there exists $x\in L$ satisfying
\begin{equation}\label{equation 3.1 of theorem 3.1}
   \{n\in \mathbb{N}: |S(x_n,x_n,x)-S(x_n,x_n,z)|\geq \varepsilon \}\in \mathcal{I}. 
\end{equation}
Then $z\in L$.
\end{thm}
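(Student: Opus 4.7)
The plan is to show that the real sequence $\{S(x_n,x_n,z)\}_{n\in\mathbb{N}}$ is $\mathcal{I}$-convergent to some real number $\alpha$; by Definition \ref{I-localized} this immediately places $z$ in $L=loc_{\mathcal{I}}(x_n)$. The idea is to approximate $z$, in the sense of (\ref{equation 3.1 of theorem 3.1}), by a sequence of points $y_k\in L$ with finer and finer tolerances $1/k$, then use the known $\mathcal{I}$-limits along each $y_k$ to extract $\alpha$ as the limit of the corresponding constants $\alpha_k$.

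First, for each positive integer $k$, I would apply the hypothesis with $\varepsilon=1/k$ to pick $y_k\in L$ with
\[
A_k=\{n\in\mathbb{N}:|S(x_n,x_n,y_k)-S(x_n,x_n,z)|\geq 1/k\}\in\mathcal{I}.
\]
Since $y_k\in L$, there is an $\alpha_k\in\mathbb{R}$ with $\mathcal{I}\text{-}\lim_n S(x_n,x_n,y_k)=\alpha_k$, so that
\[
B_k=\{n\in\mathbb{N}:|S(x_n,x_n,y_k)-\alpha_k|\geq 1/k\}\in\mathcal{I}.
\]

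Next, I would verify that $\{\alpha_k\}$ is Cauchy in $\mathbb{R}$. For any $k,m\in\mathbb{N}$, the union $A_k\cup A_m\cup B_k\cup B_m$ lies in $\mathcal{I}$; since $\mathcal{I}$ is an admissible (hence proper) ideal, its complement in $\mathbb{N}$ is non-empty, so picking any $n$ there and inserting $S(x_n,x_n,y_k)$, $S(x_n,x_n,z)$, $S(x_n,x_n,y_m)$ as intermediate terms yields
\[
|\alpha_k-\alpha_m|\leq 1/k+1/k+1/m+1/m=2/k+2/m.
\]
Hence $\alpha_k\to\alpha$ for some $\alpha\in\mathbb{R}$. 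Finally, to conclude $\mathcal{I}\text{-}\lim_n S(x_n,x_n,z)=\alpha$, fix $\varepsilon>0$, choose $k$ so large that $2/k+|\alpha_k-\alpha|<\varepsilon$, and observe that for every $n\notin A_k\cup B_k$ the triangle inequality gives $|S(x_n,x_n,z)-\alpha|<\varepsilon$; therefore $\{n:|S(x_n,x_n,z)-\alpha|\geq\varepsilon\}\subset A_k\cup B_k\in\mathcal{I}$.

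The main obstacle I anticipate is the bookkeeping in the Cauchy step: one must combine four $\mathcal{I}$-small sets in a way that still leaves an index $n$ available for the four-term triangle estimate. This is precisely where admissibility (non-triviality) of $\mathcal{I}$ is used. The hypothesis $(AP)$ does not appear to be indispensable for this direct route, but it would allow an alternative argument passing through $\mathcal{I}^*$-localization via the preceding theorem, which some readers may find more natural.
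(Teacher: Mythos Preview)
Your argument is correct and, as you yourself note at the end, it nowhere uses condition $(AP)$; you have therefore proved a stronger statement than the one in the paper. The paper proceeds quite differently: it fixes a single $\varepsilon>0$ and the corresponding approximating point $x\in L$, then invokes $(AP)$ to upgrade the $\mathcal{I}$-Cauchy property of $\{S(x_n,x_n,x)\}$ to an $\mathcal{I}^*$-Cauchy property along some $B\in\mathcal{F}(\mathcal{I})$, intersects $B$ with the complement $M$ of the $\varepsilon$-exceptional set, and argues that $\{S(x_n,x_n,z)\}$ restricted to $K=M\cap B$ is Cauchy, hence convergent, hence the full sequence is $\mathcal{I}^*$-convergent and so $\mathcal{I}$-convergent. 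Your route stays entirely at the $\mathcal{I}$-level: instead of passing through $\mathcal{I}^*$, you let the approximating point $y_k$ vary with the scale $1/k$, extract the candidate limit $\alpha=\lim_k\alpha_k$ by a simple four-term triangle estimate (using only that a proper ideal cannot swallow $\mathbb{N}$), and close with the inclusion $\{n:|S(x_n,x_n,z)-\alpha|\geq\varepsilon\}\subset A_k\cup B_k\in\mathcal{I}$. What your approach buys is the removal of the $(AP)$ hypothesis and an explicit identification of the $\mathcal{I}$-limit; what the paper's approach buys is continuity with the $\mathcal{I}^*$-machinery established in the preceding theorem, at the price of that extra hypothesis.
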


\begin{proof}
 Let $\varepsilon >0$ be given and $x\in L=loc_{\mathcal{I}}(x_n)$ be a point satisfying the condition (\ref{equation 3.1 of theorem 3.1}).
Let $A=\{n\in \mathbb{N}: |S(x_n,x_n,x)-S(x_n,x_n,z)|\geq \varepsilon \}\in \mathcal{I}$. Then $M=\mathbb{N}\setminus A=\{k_1<k_2<\cdots<k_n<\cdots\}\in \mathcal{F(I)}$.Therefore, for $k_n\in M$, we have $|S(x_n,x_n,x)-S(x_n,x_n,z)|< \varepsilon$. Now since  $x\in L=loc_{\mathcal{I}}(x_n)$, the number sequence $\{S(x_n,x_n,x)\}_{n\in\mathbb{N}}$ is $\mathcal{I}$-convergent. So the number sequence $\{S(x_n,x_n,x)\}_{n\in\mathbb{N}}$ is $\mathcal{I}$-Cauchy. Again since $\mathcal{I}$ satisfies the condition $(AP)$, the number sequence $\{S(x_n,x_n,x)\}_{n\in\mathbb{N}}$ is $\mathcal{I^*}$-Cauchy. Then there exists $B=\{m_1<m_2<\cdots<m_k<\cdots\}\in\mathcal{F(I)}$ such that the subsequence $\{S(x_{m_k},x_{m_k},x)\}$ is a Cauchy sequence i.e., there exists $n_0\in\mathbb{N}$ such that $|S(x_{m_r},x_{m_r},x)-S(x_{m_k},x_{m_k},x)|<\varepsilon$ for all $r,k>n_0$. Since $M\cap B\in\mathcal{F(I)}$, let us enumerate the set $M\cap B=K=\{n_1<n_2<\cdots<n_p<\cdots\}\in\mathcal{F(I)}$. Then for  $n_p,n_q\in K$ and $n_p,n_q>m_{n_0}$, we have \begin{align*}
    &|S(x_{n_p},x_{n_p},z)-S(x_{n_q},x_{n_q},z)|\\
    \leq & |S(x_{n_p},x_{n_p},z)-S(x_{n_p},x_{n_p},x)|+|S(x_{n_p},x_{n_p},x)-S((x_{n_q},x_{n_q},x)| \\ &+|S((x_{n_q},x_{n_q},x)-S((x_{n_q},x_{n_q},z)|\\
    < & \varepsilon+\varepsilon+\varepsilon \\
    = & 3\varepsilon. 
\end{align*}
 Therefore we have the subsequence $\{S(x_n,x_n,z)\}_{n\in K}$ is a Cauchy Sequence. So the number sequence $\{S(x_n,x_n,z)\}_{n\in K}$ is convergent. Therefore the number sequence  $\{S(x_n,x_n,z)\}_{n\in \mathbb{N}}$ is $\mathcal{I^*}$-convergent. This gives the number sequence $\{S(x_n,x_n,z)\}_{n\in \mathbb{N}}$ is $\mathcal{I}$-convergent. Therefore the sequence $\{x_n\}_{n\in\mathbb{N}}$ is $\mathcal{I}$-localized and $z\in L$. This proves the theorem.
\end{proof}

\begin{defi}(cf.\cite{Nabiev Savas Gurdal})
Let $(X,S)$ be a $S$-metric space and $\xi\in X$. Then 
 $\xi$ is said to be  $\mathcal{I}$-limit point of the sequence $\{x_n\}_{n\in\mathbb{N}} \in X$ if there is a set $M=\{m_1 <m_2<\cdots \}$ such that $M\notin \mathcal{I}$ and $\lim _{k\to \infty}S(x_{m_k},x_{m_k},\xi)=0$. 
And the point $\xi$ is said to be  $\mathcal{I}$-cluster point of the sequence $\{x_n\}_{n\in\mathbb{N}} \in X$ if and only if for each $\varepsilon >0$ we have $\{n\in \mathbb{N}: S(x_n,x_n,\xi)<\varepsilon\}\notin \mathcal{I}$. 
\end{defi}

\begin{defi}(cf.\cite{Nabiev Savas Gurdal})
Let $\{x_n\}_{n\in\mathbb{N}}$ be a sequence in $X$ and $M=\{m_1 <m_2<\cdots \}\subset \mathbb{N}$. 
If $M\in \mathcal{I}$, then the subsequence $\{x_n\}_{n\in M}$ of  $\{x_n\}_{n\in\mathbb{N}}$ is called $\mathcal{I}$-thin subsequence of  $\{x_n\}_{n\in\mathbb{N}}$. On the other hand, if $M\notin \mathcal{I}$, then the subsequence $\{x_n\}_{n\in M}$ of  $\{x_n\}_{n\in\mathbb{N}}$ is called $\mathcal{I}$-nonthin subsequence of  $\{x_n\}_{n\in\mathbb{N}}$.
\end{defi}

\begin{prop}\label{prop3.2}
If $z\in X$ is an $\mathcal{I}$-limit point (respectively  $\mathcal{I}$-cluster point) of a sequence $\{x_n\}_{n\in \mathbb{N}}\in X$, then for each $y\in X$ the number $S(z,z,y)$ is an $\mathcal{I}$-limit point (respectively  $\mathcal{I}$-cluster point) of the number sequence $\{S(x_n,x_n,y)\}_{n\in \mathbb{N}}$. 
\end{prop}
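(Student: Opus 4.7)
The plan is to reduce both parts of the claim to the triangle-type inequality of Lemma \ref{lem0.1}. Substituting $x \mapsto x_n$ and $\xi \mapsto y$ into $|S(x,x,\xi) - S(\xi,\xi,y)| \leq 2 S(x,x,y)$ and invoking the symmetry $S(y,y,z) = S(z,z,y)$ available in any $S$-metric space yields the key estimate
\[
|S(x_n,x_n,y) - S(z,z,y)| \leq 2\, S(x_n,x_n,z) \qquad \text{for every } n \in \mathbb{N} \text{ and every } y \in X.
\]
Once this Lipschitz-type bound is in hand, both assertions reduce to a containment argument on the index set, using only the hereditariness of the ideal $\mathcal{I}$.

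For the $\mathcal{I}$-limit point case, I would fix $y \in X$ and take a witness set $M = \{m_1 < m_2 < \cdots\} \notin \mathcal{I}$ such that $S(x_{m_k},x_{m_k},z) \to 0$ as $k \to \infty$, which exists by hypothesis. Applying the key estimate at $n = m_k$ immediately gives $S(x_{m_k},x_{m_k},y) \to S(z,z,y)$, so the same set $M$ witnesses that $S(z,z,y)$ is an $\mathcal{I}$-limit point of the real sequence $\{S(x_n,x_n,y)\}_{n\in\mathbb{N}}$.

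For the $\mathcal{I}$-cluster point case, I would fix $y \in X$ and $\varepsilon > 0$, and consider the set $A := \{n \in \mathbb{N} : S(x_n,x_n,z) < \varepsilon/2\}$, which does not belong to $\mathcal{I}$ by hypothesis. The key estimate forces
\[
A \subseteq \bigl\{n \in \mathbb{N} : |S(x_n,x_n,y) - S(z,z,y)| < \varepsilon \bigr\},
\]
and since any superset of a set outside the hereditary ideal $\mathcal{I}$ must itself lie outside $\mathcal{I}$, the right-hand set is not in $\mathcal{I}$. This is precisely the condition for $S(z,z,y)$ to be an $\mathcal{I}$-cluster point of $\{S(x_n,x_n,y)\}_{n\in\mathbb{N}}$.

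There is no real obstacle here; the whole argument is a direct transfer of the ideal-theoretic information from the $S$-metric to the real line via the Lipschitz-type bound of Lemma \ref{lem0.1}. The only fine point is to be careful to use the symmetry $S(y,y,z) = S(z,z,y)$ when reading off the inequality, and to keep the two-sided absolute-value form so that the bound controls the full deviation $|S(x_n,x_n,y) - S(z,z,y)|$ rather than only one direction.
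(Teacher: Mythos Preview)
Your proposal is correct and follows essentially the same route as the paper's own proof: both arguments derive the key inequality $|S(x_n,x_n,y)-S(z,z,y)|\leq 2S(x_n,x_n,z)$ from Lemma~\ref{lem0.1} (with the symmetry $S(y,y,z)=S(z,z,y)$) and then use, respectively, the same witness set $M$ for the $\mathcal{I}$-limit point case and the same containment $\{n:S(x_n,x_n,z)<\varepsilon/2\}\subset\{n:|S(x_n,x_n,y)-S(z,z,y)|<\varepsilon\}$ for the $\mathcal{I}$-cluster point case.
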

\begin{proof}
Let $z\in X$ be an $\mathcal{I}$-limit point of $\{x_n\}_{n\in \mathbb{N}}\in X$. Then there is a set $M=\{m_1<m_2<\cdots<m_k<\cdots\}\notin \mathcal{I}$ such that $\lim _{k\to \infty}S(x_{m_k},x_{m_k},z)=0$. Then for each $\varepsilon>0$ there exists $n_0\in\mathbb{N}$ such that $S(x_{m_k},x_{m_k},z)<\frac{\varepsilon}{2}$ for all $k>n_0$. Let $y\in X$. Now by Lemma \ref{lem0.1}, we have $|S(x_{m_k},x_{m_k},y)-S(z,z,y)|\leq 2S(x_{m_k},x_{m_k},z)<\varepsilon$, $\forall k>n_0$. Therefore $\lim _{k\to \infty}S(x_{m_k},x_{m_k},y)=S(z,z,y)$. Hence, according to the definition of $\mathcal{I}$-limit point of a real sequence, we get  $S(z,z,y)$ is an $\mathcal{I}$-limit point of the number sequence $\{S(x_n,x_n,y)\}_{n\in\mathcal{I}}$.

Let $z\in X$ be an $\mathcal{I}$-cluster point of $\{x_n\}_{n\in \mathbb{N}}\in X$. Then for each $\varepsilon>0$ we have $\{n\in\mathbb{N}: S(x_n,x_n,z)<\frac{\varepsilon}{2}\}\notin \mathcal{I}$. Let $y\in X$. Now using the Lemma \ref{lem0.1}, we get $|S(x_n,x_n,y)-S(z,z,y)|\leq 2S(x_n,x_n,z)$. Therefore $\{n\in\mathbb{N}: S(x_n,x_n,z)<\frac{\varepsilon}{2}\}\subset \{n\in\mathbb{N}:|S(x_n,x_n,y)-S(z,z,y)|<\varepsilon\}$. Hence $ \{n\in\mathbb{N}:|S(x_n,x_n,y)-S(z,z,y)|<\varepsilon\}\notin\mathcal{I}$. Therefore the number $S(z,z,y)$ is an $\mathcal{I}$-cluster point of the number sequence $\{S(x_n,x_n,y)\}_{n\in\mathbb{N}}$.
\end{proof}
\begin{thm}(cf.\cite{Tripathy})\label{thm3.4}
Let $x=\{x_n\}_{n\in\mathbb{N}}$ be a real sequence such that $\mathcal{I}\text{-}\lim x_n=\xi$. If $\Lambda_x(\mathcal{I})$ and $\Gamma_x(\mathcal{I})$ are the sets of all $\mathcal{I}$-limit points and $\mathcal{I}$-cluster points of $x$ respectively, then we have $\Lambda_x(\mathcal{I})=\Gamma_x(\mathcal{I})=\{\xi\}$.
\end{thm}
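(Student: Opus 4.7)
The plan is to decompose the equality $\Lambda_x(\mathcal{I}) = \Gamma_x(\mathcal{I}) = \{\xi\}$ into four inclusions: $\xi \in \Gamma_x(\mathcal{I})$, $\Gamma_x(\mathcal{I}) \subseteq \{\xi\}$, $\Lambda_x(\mathcal{I}) \subseteq \Gamma_x(\mathcal{I})$, and finally $\xi \in \Lambda_x(\mathcal{I})$. The first three are short direct arguments; the last is where the main work lies. For $\xi \in \Gamma_x(\mathcal{I})$, I would simply unpack the hypothesis: for any $\varepsilon>0$, $\{n:|x_n-\xi|\geq\varepsilon\}\in\mathcal{I}$, so the complement $\{n:|x_n-\xi|<\varepsilon\}$ belongs to $\mathcal{F(I)}$ and in particular is not in $\mathcal{I}$. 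For the uniqueness side $\Gamma_x(\mathcal{I})\subseteq\{\xi\}$, given a hypothetical cluster point $\eta\neq\xi$ I would choose $\delta=|\eta-\xi|/2$ and note by the triangle inequality that $|x_n-\eta|<\delta$ forces $|x_n-\xi|>\delta$, so $\{n:|x_n-\eta|<\delta\}\subseteq\{n:|x_n-\xi|\geq\delta\}\in\mathcal{I}$, contradicting the cluster property.

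Next I would dispose of $\Lambda_x(\mathcal{I})\subseteq\Gamma_x(\mathcal{I})$ by the routine observation that if $M=\{m_1<m_2<\cdots\}\notin\mathcal{I}$ witnesses $\eta$ as an $\mathcal{I}$-limit point via $\lim_k x_{m_k}=\eta$, then for every $\varepsilon>0$ only finitely many $k$ satisfy $|x_{m_k}-\eta|\geq\varepsilon$; removing a finite set from $M$ keeps it outside $\mathcal{I}$ (by admissibility), so the cofinite remainder of $M$ lies inside $\{n:|x_n-\eta|<\varepsilon\}$, which therefore cannot be in $\mathcal{I}$. Combined with the previous paragraph this gives $\Lambda_x(\mathcal{I})\subseteq\{\xi\}$.

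The remaining inclusion $\xi\in\Lambda_x(\mathcal{I})$ is the genuine obstacle, since it requires exhibiting an index set $M\notin\mathcal{I}$ along which the full sequence converges to $\xi$ in the ordinary sense. A naive greedy selection $m_k\in A_k=\{n:|x_n-\xi|<1/k\}$ produces a subsequence converging to $\xi$, but the resulting set $\{m_k\}$ may itself lie in $\mathcal{I}$, so the construction has to be coordinated more carefully. My plan is to invoke the condition $(AP)$ (implicit here via the Tripathy reference, in line with how it is used in the preceding results of this paper): put $A_k^{\,c}=\{n:|x_n-\xi|\geq 1/k\}\in\mathcal{I}$, use $(AP)$ to select $B_k\in\mathcal{I}$ with $A_k^{\,c}\triangle B_k$ finite and $B=\bigcup_k B_k\in\mathcal{I}$, and set $M=\mathbb{N}\setminus B\in\mathcal{F(I)}$. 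For each fixed $k$, since $A_k^{\,c}\setminus B_k$ is finite, all sufficiently large $n\in M$ satisfy $n\notin A_k^{\,c}$, i.e.\ $|x_n-\xi|<1/k$; letting $k\to\infty$ yields $x_n\to\xi$ along $M$, so $\xi\in\Lambda_x(\mathcal{I})$. The most delicate point is exactly this coordinated selection via $(AP)$: without it, the natural extraction may drop out of $\mathcal{F(I)}$, and it is here that the proof must be handled with care.
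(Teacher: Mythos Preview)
The paper does not actually prove Theorem~3.4: it is quoted with the tag ``(cf.~\cite{Tripathy})'' and used as a black box in Lemma~\ref{lemma3.3} and the proposition following it. So there is no in-paper argument to compare your proposal against, and your write-up is in effect supplying a proof the authors chose to import.

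Your first three steps---$\xi\in\Gamma_x(\mathcal I)$, $\Gamma_x(\mathcal I)\subseteq\{\xi\}$, and $\Lambda_x(\mathcal I)\subseteq\Gamma_x(\mathcal I)$---are clean and correct for any admissible ideal; together they already give $\Lambda_x(\mathcal I)\subseteq\{\xi\}$ and $\Gamma_x(\mathcal I)=\{\xi\}$. That is precisely what the paper actually \emph{uses}: in Lemma~\ref{lemma3.3} one only needs that any two $\mathcal I$-limit (resp.\ cluster) points of the number sequence coincide, and in the subsequent proposition one already knows $S(y,y,y)\in\Lambda_t(\mathcal I)$ before invoking the theorem, so again only the inclusion $\Lambda_t(\mathcal I)\subseteq\{\xi\}$ is required. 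Neither application assumes $(AP)$.

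Where care is warranted is your final step $\xi\in\Lambda_x(\mathcal I)$. You are right that a naive diagonal extraction can land in $\mathcal I$, and your $(AP)$-based construction is correct. But your claim that $(AP)$ is ``implicit here \ldots\ in line with how it is used in the preceding results of this paper'' is not accurate: the two places where Theorem~3.4 is invoked carry no $(AP)$ hypothesis. In fact, for a general admissible ideal the inclusion $\xi\in\Lambda_x(\mathcal I)$ can fail outright. Take $\mathcal I$ dual to a free ultrafilter that is not a $P$-point, choose a decreasing chain $F_1\supseteq F_2\supseteq\cdots$ in $\mathcal F(\mathcal I)$ admitting no pseudo-intersection in $\mathcal F(\mathcal I)$, and set $x_n=1/k$ on $F_k\setminus F_{k+1}$, $x_n=0$ on $\bigcap_kF_k$, $x_n=1$ off $F_1$; then $\mathcal I\text{-}\lim x_n=0$, yet any $M\notin\mathcal I$ lies in the ultrafilter and must have $M\setminus F_k$ finite for all $k$ in order that $x_n\to 0$ along $M$, which is impossible. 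So the literal equality $\Lambda_x(\mathcal I)=\{\xi\}$ in the theorem statement is not available without an extra hypothesis such as $(AP)$; what survives unconditionally---and what the paper genuinely needs---is $\Lambda_x(\mathcal I)\subseteq\{\xi\}=\Gamma_x(\mathcal I)$, which your first three steps already establish.
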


\begin{lem}\label{lemma3.3}
If $\alpha, \beta \in X$ are $\mathcal{I}$-limit points (respectively $\mathcal{I}$-cluster points) of an $\mathcal{I}$-localized sequence $\{x_n\}_{n\in\mathbb{N}}$ then $S(\alpha,\alpha,x)=S(\beta,\beta,x)$ for each $x\in loc_\mathcal{I}(x_n)$.
\end{lem}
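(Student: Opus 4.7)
The plan is to reduce this to the uniqueness of $\mathcal{I}$-limit/cluster points for an $\mathcal{I}$-convergent real sequence, which is Theorem \ref{thm3.4}, by pushing $\alpha$ and $\beta$ to the real line using Proposition \ref{prop3.2}.

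Fix an arbitrary $x \in loc_{\mathcal{I}}(x_n)$. By definition of the $\mathcal{I}$-locator, the real sequence $\{S(x_n,x_n,x)\}_{n\in\mathbb{N}}$ is $\mathcal{I}$-convergent; call its $\mathcal{I}$-limit $\gamma = \gamma(x)$. Now apply Proposition \ref{prop3.2} with $y := x$: since $\alpha$ is an $\mathcal{I}$-limit point (resp.\ $\mathcal{I}$-cluster point) of $\{x_n\}_{n\in\mathbb{N}}$, the real number $S(\alpha,\alpha,x)$ is an $\mathcal{I}$-limit point (resp.\ $\mathcal{I}$-cluster point) of $\{S(x_n,x_n,x)\}_{n\in\mathbb{N}}$. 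By the same argument applied to $\beta$, the number $S(\beta,\beta,x)$ is also an $\mathcal{I}$-limit point (resp.\ $\mathcal{I}$-cluster point) of the same real sequence.

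At this point the proof is essentially finished: by Theorem \ref{thm3.4}, the set $\Lambda(\mathcal{I})$ of $\mathcal{I}$-limit points and the set $\Gamma(\mathcal{I})$ of $\mathcal{I}$-cluster points of the $\mathcal{I}$-convergent real sequence $\{S(x_n,x_n,x)\}_{n\in\mathbb{N}}$ each consist of the single point $\{\gamma\}$. Hence $S(\alpha,\alpha,x) = \gamma = S(\beta,\beta,x)$. Since $x \in loc_{\mathcal{I}}(x_n)$ was arbitrary, the conclusion follows in both the limit-point and cluster-point cases simultaneously.

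There is really no major obstacle here; the only thing to be careful about is bookkeeping between the ``limit point'' and ``cluster point'' versions, which run in parallel because Proposition \ref{prop3.2} covers both cases and Theorem \ref{thm3.4} provides the uniqueness for both in one stroke. The argument never uses any property of $x$ beyond membership in $loc_{\mathcal{I}}(x_n)$, which is exactly what is needed to invoke $\mathcal{I}$-convergence of $\{S(x_n,x_n,x)\}_{n\in\mathbb{N}}$ and thus Theorem \ref{thm3.4}.
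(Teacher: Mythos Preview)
Your proof is correct and follows essentially the same approach as the paper: fix $x\in loc_{\mathcal I}(x_n)$, use Proposition~\ref{prop3.2} to see that $S(\alpha,\alpha,x)$ and $S(\beta,\beta,x)$ are $\mathcal I$-limit (resp.\ $\mathcal I$-cluster) points of the $\mathcal I$-convergent real sequence $\{S(x_n,x_n,x)\}$, and then invoke Theorem~\ref{thm3.4} to conclude both equal the unique $\mathcal I$-limit. The paper's argument is identical in structure and in the lemmas it cites.
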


\begin{proof}
Let $x\in loc_\mathcal{I}(x_n)$ and $y=\{y_n\}=\{S(x_n,x_n,x)\}_{n\in\mathbb{N}}$. Let $\alpha,\beta$ be any two $\mathcal{I}$-limit points (respectively $\mathcal{I}$-cluster points) of $\{x_n\}_{n\in\mathbb{N}}$. Then by the Proposition \ref{prop3.2}, $S(\alpha,\alpha,x)$, $S(\beta,\beta,x)$ are the $\mathcal{I}$-limit points (respectively $\mathcal{I}$-cluster points) of the number sequence $\{S(x_n,x_n,x)\}_{n\in\mathbb{N}}$ i.e., $S(\alpha,\alpha,x),\ S(\beta,\beta,x)\in \Lambda_y(\mathcal{I})$ (respectively $\Gamma_y(\mathcal{I})$). Since $\{x_n\}_{n\in\mathbb{N}}$ is an $\mathcal{I}$-localized sequence  and $x\in loc_\mathcal{I}(x_n)$, the number sequence $\{S(x_n,x_n,x)\}_{n\in\mathbb{N}}$ is $\mathcal{I}$-convergent. Let $y_n\xrightarrow{\mathcal{I}}\xi$. Then by the Theorem \ref{thm3.4}, $\Lambda_y(\mathcal{I})=\Gamma_y(\mathcal{I})=\{\xi\}$. Therefore $S(\alpha,\alpha,x)=S(\beta,\beta,x)$ for each $x\in loc_\mathcal{I}(x_n)$. This completes the proof.
\end{proof}

\begin{lem}
$loc_{\mathcal{I}}(x_n)$ does not contain more than one $\mathcal{I}$-limit point (respectively $ \mathcal{I}$-cluster point)  of the sequence $\{x_n\}_{n\in \mathbb{N}}$ in $X$.
\end{lem}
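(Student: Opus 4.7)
The statement is essentially a corollary of Lemma \ref{lemma3.3}, and the plan is to extract it by feeding an $\mathcal{I}$-limit (resp. $\mathcal{I}$-cluster) point of the sequence back into that lemma as the test point $x$.

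More precisely, suppose for contradiction that $\alpha,\beta\in loc_{\mathcal{I}}(x_n)$ are two distinct $\mathcal{I}$-limit points (respectively $\mathcal{I}$-cluster points) of $\{x_n\}_{n\in\mathbb{N}}$. Since $\alpha,\beta$ both lie in $loc_{\mathcal{I}}(x_n)$, Lemma \ref{lemma3.3} applies and yields
\[
S(\alpha,\alpha,x)=S(\beta,\beta,x)\qquad\text{for every } x\in loc_{\mathcal{I}}(x_n).
\]

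The key move is now to choose $x=\alpha$, which is legitimate precisely because $\alpha\in loc_{\mathcal{I}}(x_n)$. This forces $S(\alpha,\alpha,\alpha)=S(\beta,\beta,\alpha)$, and by axiom (i)--(ii) of the $S$-metric the left-hand side is $0$. Thus $S(\beta,\beta,\alpha)=0$, whence by axiom (ii) we obtain $\beta=\alpha$, contradicting distinctness. (Equivalently, one could plug in $x=\beta$ to reach the same conclusion.) The same argument works verbatim in the cluster-point case, since Lemma \ref{lemma3.3} was stated for both variants simultaneously.

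I do not anticipate any obstacle: the proof is a one-line specialisation of the previous lemma, the only subtlety being the observation that the $\mathcal{I}$-limit/cluster point itself is available as a test point in $loc_{\mathcal{I}}(x_n)$, which is exactly the hypothesis of the statement.
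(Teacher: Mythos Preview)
Your proof is correct and essentially identical to the paper's own argument: the paper also assumes two distinct $\mathcal{I}$-limit (respectively cluster) points $z_1,z_2\in loc_{\mathcal{I}}(x_n)$, invokes Lemma~\ref{lemma3.3} with $x=z_1$ to get $S(z_1,z_1,z_1)=S(z_2,z_2,z_1)=0$, and derives the contradiction $z_1=z_2$. The only cosmetic difference is naming ($\alpha,\beta$ versus $z_1,z_2$); note, incidentally, that Lemma~\ref{lemma3.3} itself does not require $\alpha,\beta\in loc_{\mathcal{I}}(x_n)$---that hypothesis is needed only so that you may take $x=\alpha$, exactly as you say.
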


\begin{proof}
If possible, let $z_1,z_2 \in loc_{\mathcal{I}}(x_n)$ be two distinct $\mathcal{I}$-limit points (respectively $\mathcal{I}$-cluster points) of the sequence $\{x_n\}_{n\in \mathbb{N}}$. then by the  Lemma \ref{lemma3.3}, we have $S(z_1,z_1,z_1)=S(z_2,z_2,z_1)$. But $S(z_1,z_1,z_1)=0$. Consequently $S(z_2,z_2,z_1)=0$. This gives $z_1=z_2$ which leads to a contradiction. This proves the lemma.
\end{proof}
\begin{rem}
We know from the Theorem \ref{thm3.4} that if $\{x_n\}_{n\in\mathcal{I}}$ is $\mathcal{I}$-convergent to $x$ then $\mathcal{I}$-limit point is unique. But converse result holds if the $\mathcal{I}$-limit point belongs to $\mathcal{I}$-locator of $\{x_n\}_{n\in\mathbb{N}}$ which is shown in the following proposition.
\end{rem}

\begin{prop}
If the sequence $\{x_n\}_{n\in \mathbb{N}}$ has an $\mathcal{I}$-limit point $y\in  loc_{\mathcal{I}}(x_n)$, then $\mathcal{I}-\lim _{n\to \infty}x_n =y$.
\end{prop}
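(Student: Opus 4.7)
The plan is to exploit the interplay between the locator hypothesis (which guarantees $\mathcal{I}$-convergence of a \emph{numerical} sequence) and the $\mathcal{I}$-limit point hypothesis (which, once transferred via Proposition \ref{prop3.2}, pins down the value of that limit as $0$). In effect the whole argument reduces to identifying two descriptions of the same $\mathcal{I}$-limit for the real sequence $\{S(x_n,x_n,y)\}_{n\in\mathbb{N}}$.

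First I would set $a_n = S(x_n,x_n,y)$ and note two facts. Since $y \in loc_{\mathcal{I}}(x_n)$, Definition \ref{I-localized} tells us that $\{a_n\}$ is $\mathcal{I}$-convergent in $\mathbb{R}$, say $\mathcal{I}\text{-}\lim a_n = \xi$ for some $\xi \geq 0$. On the other hand, since $y$ is an $\mathcal{I}$-limit point of $\{x_n\}$, Proposition \ref{prop3.2} (applied with the roles $z = y$ and $y = y$) shows that $S(y,y,y) = 0$ is an $\mathcal{I}$-limit point of the numerical sequence $\{a_n\} = \{S(x_n,x_n,y)\}$.

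Next I would invoke Theorem \ref{thm3.4} applied to the $\mathcal{I}$-convergent real sequence $\{a_n\}$: the set $\Lambda_{a}(\mathcal{I})$ of $\mathcal{I}$-limit points is exactly $\{\xi\}$. Since $0 \in \Lambda_{a}(\mathcal{I})$ by the previous step, this forces $\xi = 0$, i.e. $\mathcal{I}\text{-}\lim_{n\to\infty} S(x_n,x_n,y) = 0$. By the definition of $\mathcal{I}$-convergence in $(X,S)$ recalled in the Preliminaries, this is exactly $\mathcal{I}\text{-}\lim_{n\to\infty} x_n = y$, completing the proof.

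There is really no serious obstacle here; the only point that requires care is checking that Proposition \ref{prop3.2} is legitimately applicable with $y$ playing both roles (this is fine since the proposition is stated for arbitrary $y \in X$), and that Theorem \ref{thm3.4}, although stated in \cite{Tripathy} for real sequences, applies because we have transferred everything to the real sequence $\{a_n\}$. Neither step involves the $S$-metric structure beyond what Lemma \ref{lem0.1} already absorbed into Proposition \ref{prop3.2}.
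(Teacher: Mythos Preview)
Your argument is correct and follows essentially the same route as the paper: transfer the $\mathcal{I}$-limit point via Proposition \ref{prop3.2} to get $S(y,y,y)=0$ as an $\mathcal{I}$-limit point of the real sequence $\{S(x_n,x_n,y)\}$, use the locator hypothesis to ensure this real sequence is $\mathcal{I}$-convergent to some $\xi$, and then invoke Theorem \ref{thm3.4} to force $\xi=0$. The paper's proof is virtually identical in structure and in the lemmas it cites.
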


\begin{proof}
Since $y\in loc_\mathcal{I}(x_n)$ is an $\mathcal{I}$-limit point of $\{x_n\}_{n\in\mathbb{N}}$, then by the Proposition \ref{prop3.2}, $S(y,y,y)$ is an $\mathcal{I}$-limit point of the number sequence $\{S(x_n,x_n,y)\}_{n\in\mathbb{N}}$.
By the condition $y\in loc_\mathcal{I}(x_n)$, the number sequence $t=\{t_n\}=\{S(x_n,x_n,y)\}_{n\in\mathbb{N}}$ is $\mathcal{I}$-convergent. Let $\mathcal{I}\text{-}\lim_{n\to\infty}S(x_n,x_n,y)=\xi$. Now since $S(y,y,y)\in\Lambda_t(\mathcal{I})$ and by the Theorem \ref{thm3.4} we have $\Lambda_t(\mathcal{I})=\{\xi\}$, therefore $S(y,y,y)=\xi$. So $\mathcal{I}\text{-}\lim_{n\to\infty}S(x_n,x_n,y)=\xi=S(y,y,y)=0$ i.e., $\mathcal{I}\text{-}\lim_{n\to\infty}S(x_n,x_n,y)=0$. So for each $\varepsilon >0$ the set $\{n\in\mathbb{N}: S(x_n,x_n,y)\geq \varepsilon\}\in\mathcal{I}$ which gives $\mathcal{I}\text{-}\lim_{n\to\infty} x_n=y$. This completes the proof.
\end{proof}

\begin{defi}(cf.\cite{Nabiev Savas Gurdal})
Let  $\{x_n\}_{n\in \mathbb{N}}$ be $\mathcal{I}$-localized sequence with the $\mathcal{I}$-locator $L=loc_{\mathcal{I}}(x_n)$. Then the number $\sigma= \text{inf}_{x\in L}(\mathcal{I}-\lim _{n\to \infty} S(x_n,x_n,x))$ is called the $\mathcal{I}$-barrier of $\{x_n\}_{n\in \mathbb{N}}$.
\end{defi}

\begin{thm}\label{thm3.5}
Let $\mathcal{I}$ satisfies the condition $(AP)$. Then an $\mathcal{I}$-localized sequence is an $\mathcal{I}$-Cauchy sequence if and only if $\sigma=0$.
\end{thm}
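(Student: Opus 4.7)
I will treat the two implications separately. The necessity is an immediate consequence of Lemma \ref{lem 3.1}; the substantive work is in the sufficiency, which boils down to one application of the $S$-metric triangle inequality, applied to a cleverly chosen locator point.

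\emph{Necessity.} Assume the $\mathcal{I}$-localized sequence $\{x_n\}_{n\in\mathbb{N}}$ is $\mathcal{I}$-Cauchy. By Lemma \ref{lem 3.1}, $L:=loc_{\mathcal{I}}(x_n)=X$, so every term $x_m$ lies in $L$. Fix $\varepsilon>0$ and choose $n_0$ with $A:=\{n\in\mathbb{N}:S(x_n,x_n,x_{n_0})\geq\varepsilon\}\in\mathcal{I}$. Because $x_{n_0}\in L$, the scalar sequence $\{S(x_n,x_n,x_{n_0})\}_{n\in\mathbb{N}}$ is $\mathcal{I}$-convergent, say to $\ell\geq 0$. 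For any $\delta>0$ the sets $\mathbb{N}\setminus A$ and $\{n:|S(x_n,x_n,x_{n_0})-\ell|<\delta\}$ both belong to $\mathcal{F(I)}$, hence intersect; any $n$ in the intersection witnesses $\ell<\varepsilon+\delta$, so $\ell\leq\varepsilon$. Thus $\sigma\leq\ell\leq\varepsilon$ for every $\varepsilon>0$, which together with $\sigma\geq 0$ gives $\sigma=0$.

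\emph{Sufficiency.} Assume $\sigma=0$ and fix $\varepsilon>0$. By the defining infimum, pick $x^{\ast}\in L$ with $\ell^{\ast}:=\mathcal{I}\text{-}\lim_n S(x_n,x_n,x^{\ast})<\varepsilon/6$, and set $P:=\{n\in\mathbb{N}:|S(x_n,x_n,x^{\ast})-\ell^{\ast}|<\varepsilon/6\}\in\mathcal{F(I)}$. Every $n\in P$ then satisfies $S(x_n,x_n,x^{\ast})<\varepsilon/3$. Choose any $n_0\in P$ (possible because $\mathbb{N}\setminus P\in\mathcal{I}$ is a proper subset of $\mathbb{N}$); then axiom $(iii)$ of the $S$-metric, with both first two entries equal to $x_n$ and the auxiliary point $a=x^{\ast}$, yields
\[
S(x_n,x_n,x_{n_0})\leq 2S(x_n,x_n,x^{\ast})+S(x_{n_0},x_{n_0},x^{\ast})<\tfrac{2\varepsilon}{3}+\tfrac{\varepsilon}{3}=\varepsilon
\]
for every $n\in P$. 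Consequently $\{n:S(x_n,x_n,x_{n_0})\geq\varepsilon\}\subseteq\mathbb{N}\setminus P\in\mathcal{I}$, so $\{x_n\}_{n\in\mathbb{N}}$ is $\mathcal{I}$-Cauchy.

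\emph{Role of $(AP)$ and expected obstacle.} Strictly speaking, the direct argument above does not invoke $(AP)$, but the hypothesis offers a smoother route to the sufficiency: one can upgrade the $\mathcal{I}$-convergence of $\{S(x_n,x_n,x^{\ast})\}$ to $\mathcal{I}^{\ast}$-convergence, obtaining an $M\in\mathcal{F(I)}$ on which the scalars converge in the usual sense, and then pick $n_0\in M$ with $S(x_{n_0},x_{n_0},x^{\ast})$ and the tail values of $S(x_n,x_n,x^{\ast})$ all smaller than $\varepsilon/3$; I expect the author to take this route, which is standard in the rest of the paper. The only genuine technical point is the factor $2$ in the $S$-metric inequality $S(x,x,z)\leq 2S(x,x,y)+S(z,z,y)$, which dictates the bookkeeping constants $\varepsilon/6,\ \varepsilon/3,\ \varepsilon$; aligning these is the main, if mild, obstacle.
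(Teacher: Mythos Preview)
Your proof is correct in both directions; the interesting discrepancy is where $(AP)$ enters, which is inverted relative to the paper.

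In the paper, $(AP)$ is invoked in the \emph{necessity}, not the sufficiency: the authors upgrade $\mathcal{I}$-Cauchy to $\mathcal{I}^{*}$-Cauchy, obtain $K\in\mathcal{F(I)}$ along which the subsequence is Cauchy in the ordinary sense, pick the anchor $x_{k_{n_0}}$ from $K$, and then bound $\sigma$ by $\varepsilon$ via the $\mathcal{I}$-limit of $S(x_n,x_n,x_{k_{n_0}})$. Your necessity argument avoids $(AP)$ altogether --- you take $n_0$ straight from the $\mathcal{I}$-Cauchy definition, use Lemma~\ref{lem 3.1} to place $x_{n_0}\in L$, and bound the $\mathcal{I}$-limit by a filter-intersection argument --- so your version actually establishes the forward implication for an arbitrary admissible ideal, slightly more than the theorem claims.

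For the sufficiency, the paper does \emph{not} use $(AP)$ and remarks on this immediately afterward. Its argument runs parallel to yours: from $\beta(x^{\ast})<\varepsilon$ the authors deduce $\{n:S(x_n,x_n,x^{\ast})\geq\varepsilon\}\in\mathcal{I}$, declare the sequence ``$\mathcal{I}$-convergent,'' and conclude $\mathcal{I}$-Cauchy. Your explicit triangle-inequality step with a chosen $n_0\in P$ is in fact what one needs to pass from ``$S(x_n,x_n,x^{\ast})$ is small on a filter set'' to the $\mathcal{I}$-Cauchy condition (which requires a term $x_{n_0}$ of the sequence, not an arbitrary locator point $x^{\ast}\in L$), so your write-up is the more careful of the two here. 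Your expectation that the authors would route sufficiency through $\mathcal{I}^{*}$-convergence is therefore mistaken.
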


\begin{proof}
Let $\{x_n\}_{n\in \mathbb{N}}$ be an $\mathcal{I}$-Cauchy sequence in $X$. So it is $\mathcal{I^*}$-Cauchy sequence, since $\mathcal{I}$ satisfies the condition $(AP)$. Therefore there exists a set $K=(k_n)$ such that $K\in \mathcal{F(I)}$ and $\lim _{n,m\to \infty}S(x_{k_n},x_{k_n},x_{k_m})=0$. So for each $\varepsilon >0$, there exists $n_0 \in \mathbb{N}$ such that $S(x_{k_n},x_{k_n},x_{k_{n_0}})<\varepsilon$ for all $n\geq n_0$. Since  $\{x_n\}_{n\in \mathbb{N}}$ is $\mathcal{I}$-localized sequence,  $\mathcal{I}-\lim_{n\to \infty}S(x_n,x_n,x_{k_{n_0}})$ exists. Therefore we have $\mathcal{I}-\lim_{n\to \infty}S(x_{k_n},x_{k_n},x_{k_{n_0}})\leq \varepsilon$. Hence $\sigma \leq \varepsilon$. As, $\varepsilon >0$, $\sigma=0$. 

Conversely assume that $\sigma=0$. Then by definition of $\sigma$, for each $\varepsilon >0$ there is an $x\in loc_{\mathcal{I}}(x_n) $ such that $\beta (x)=\mathcal{I}-S(x_n,x_n,x)<\varepsilon$. So $\{n\in \mathbb{N}: |S(x_n,x_n,x)-\beta(x)|\geq \varepsilon -\beta(x)\}\in \mathcal{I}$, as $\varepsilon -\beta(x) >0$. Now infact, since $S(x_n,x_n,x)=|S(x_n,x_n,x)-\beta(x)+\beta(x)|\leq |S(x_n,x_n,x)-\beta(x)|+\beta(x)$, therefore $\{n\in \mathbb{N}: S(x_n,x_n,x)\geq \varepsilon \}\in \mathcal{I}$ i.e. the sequence $\{x_n\}_{n\in\mathbb{N}}$ is $\mathcal{I}$-convergent. Consequently $\{x_n\}_{n\in \mathbb{N}}$ is an $\mathcal{I}$-Cauchy sequence. This proves the theorem.
\end{proof}

\begin{rem}
From the proof of the above theorem we can conclude that converse part holds without the condition $(AP)$.
\end{rem}

\begin{thm}
If the sequence $\{x_n\}_{n\in \mathbb{N}}$  is $\mathcal{I}$-localized in itself and $\{x_n\}_{n\in \mathbb{N}}$ contains an $\mathcal{I}$-nonthin Cauchy subsequence, then $\{x_n\}_{n\in \mathbb{N}}$ is an $\mathcal{I}$-Cauchy sequence.
\end{thm}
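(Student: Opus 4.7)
The plan is to fix $\varepsilon > 0$ and produce an index $n_0$ (depending on $\varepsilon$) for which the set $\{n \in \mathbb{N}: S(x_n,x_n,x_{n_0}) \geq \varepsilon\}$ lies in $\mathcal{I}$, which is exactly the $\mathcal{I}$-Cauchy condition. Write $P = \{n \in \mathbb{N}: x_n \in loc_{\mathcal{I}}(x_n)\}$ (so $P \in \mathcal{F(I)}$ by hypothesis, i.e.\ $\mathbb{N}\setminus P \in \mathcal{I}$) and let $M = \{m_1 < m_2 < \cdots\} \notin \mathcal{I}$ be the index set of the $\mathcal{I}$-nonthin Cauchy subsequence.

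The first step is the bookkeeping observation that $M \cap P \notin \mathcal{I}$: since $M \setminus P \subseteq \mathbb{N} \setminus P \in \mathcal{I}$, if $M \cap P$ were in $\mathcal{I}$ then $M = (M \cap P) \cup (M \setminus P)$ would be too, contradicting $M \notin \mathcal{I}$. Admissibility of $\mathcal{I}$ then forces $M \cap P$ to be infinite. Next, I would use the Cauchy property of $\{x_{m_k}\}$ to choose $K_0$ with $S(x_{m_k},x_{m_k},x_{m_r}) < \varepsilon/2$ for all $k,r \geq K_0$. Since the set $\{m_k : k \geq K_0\} \cap P$ differs from $M \cap P$ by only finitely many elements, it is again outside $\mathcal{I}$; in particular I can pick some $K_1 \geq K_0$ with $m_{K_1} \in P$ and set $n_0 := m_{K_1}$. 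The payoff is two-fold: (a) $x_{n_0} \in loc_{\mathcal{I}}(x_n)$, so $\{S(x_n,x_n,x_{n_0})\}_{n\in\mathbb{N}}$ is $\mathcal{I}$-convergent to some $\beta \geq 0$; and (b) the inclusion $\{m_k : k \geq K_0\} \subseteq \{n : S(x_n,x_n,x_{n_0}) < \varepsilon/2\}$ shows that the latter set is not in $\mathcal{I}$.

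The core of the argument, and the step I expect to be the main obstacle to phrase cleanly, is to deduce from these two facts that $\beta \leq \varepsilon/2$. Assume for contradiction that $\beta > \varepsilon/2$ and set $\eta := \beta - \varepsilon/2 > 0$. By $\mathcal{I}$-convergence, $\{n : |S(x_n,x_n,x_{n_0}) - \beta| < \eta\} \in \mathcal{F(I)}$, and on this set $S(x_n,x_n,x_{n_0}) > \beta - \eta = \varepsilon/2$. Therefore $\{n : S(x_n,x_n,x_{n_0}) \leq \varepsilon/2\} \in \mathcal{I}$, which contradicts the non-$\mathcal{I}$ inclusion from (b). Hence $\beta \leq \varepsilon/2$.

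To finish, observe that $\varepsilon - \beta \geq \varepsilon/2 > 0$, so
\[
\{n \in \mathbb{N} : S(x_n,x_n,x_{n_0}) \geq \varepsilon\} \subseteq \{n \in \mathbb{N} : |S(x_n,x_n,x_{n_0}) - \beta| \geq \varepsilon/2\},
\]
and the right-hand side belongs to $\mathcal{I}$ by $\mathcal{I}$-convergence of $\{S(x_n,x_n,x_{n_0})\}$. Since $\varepsilon > 0$ was arbitrary, $\{x_n\}_{n\in\mathbb{N}}$ is $\mathcal{I}$-Cauchy. The subtlety throughout is carefully juggling membership in $\mathcal{I}$ versus $\mathcal{F(I)}$ along the filtration $M \supset M \cap P \supset (\{m_k:k\geq K_0\} \cap P)$; admissibility is used only to pass between ``not in $\mathcal{I}$'' and ``infinite'' when a finite discrepancy appears.
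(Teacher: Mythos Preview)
Your proof is correct and proceeds by a genuinely different route than the paper. The paper argues through the $\mathcal{I}$-barrier: it assumes the subsequence lies in $loc_{\mathcal{I}}(x_n)$, observes that the Cauchy subsequence forces $\inf_{y_n}\,\mathcal{I}\text{-}\lim_m S(y_m,y_m,y_n)=0$, identifies each $\mathcal{I}\text{-}\lim_m S(x_m,x_m,y_n)$ with $\mathcal{I}\text{-}\lim_m S(y_m,y_m,y_n)$ (using that the nonthin subsequence determines a cluster point, hence the unique $\mathcal{I}$-limit), concludes $\sigma=0$, and then invokes the converse direction of Theorem~\ref{thm3.5}. You instead verify the $\mathcal{I}$-Cauchy definition directly: for each $\varepsilon$ you manufacture a single witness $n_0=m_{K_1}\in P$, extract the $\mathcal{I}$-limit $\beta$ of $S(x_n,x_n,x_{n_0})$, and use the nonthin tail $\{m_k:k\ge K_0\}\notin\mathcal{I}$ to force $\beta\le\varepsilon/2$ by a cluster-point contradiction, after which the $\mathcal{I}$-Cauchy inclusion is immediate. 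Your argument is self-contained---it needs neither the barrier notion nor Theorem~\ref{thm3.5}---and it makes transparent where admissibility is actually used; the paper's proof is shorter but leans on the machinery built earlier and leaves the equality of the two $\mathcal{I}$-limits somewhat implicit.
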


\begin{proof}
Let $\{y_n\}_{n\in \mathbb{N}}$ be an $\mathcal{I}$-nonthin Cauchy subsequence of $\{x_n\}_{n\in \mathbb{N}}$. Without any loss of generality we suppose that all the members of  $\{y_n\}_{n\in \mathbb{N}}$ are in $loc_{\mathcal{I}}(x_n) $. Since $\{y_n\}_{n\in \mathbb{N}}$ is a Cauchy sequence, then by the Theorem \ref{thm3.5},  we have $inf_{y_n\in loc_{\mathcal{I}}(x_n)} \mathcal{I}\text{-}\lim _{m\to \infty}S(y_m,y_m,y_n)=0$. Now Since  $\{x_n\}_{n\in \mathbb{N}}$  is $\mathcal{I}$-localized in itself then the number sequence $\{S(x_m,x_m,y_n)\}_{m\in \mathbb{N}}$, $y_n\in loc_{\mathcal{I}}(x_n)$, is $\mathcal{I}$-convergent. Therefore we have  $\mathcal{I}\text{-}\lim _{m\to \infty}S(x_m,x_m,y_n)=\mathcal{I}\text{-}\lim _{m\to \infty}S(y_m,y_m,y_n)=0$. This shows that $\sigma=0$. Therefore by the Theorem \ref{thm3.5} we have $\{x_n\}_{n\in \mathbb{N}}$ is an $\mathcal{I}$-Cauchy sequence. This completes the proof.
\end{proof}

\textbf{Acknowledgement.}
The second author is grateful to The Council of Scientific and Industrial Research (CSIR), HRDG, India, for the grant of Junior Research Fellowship during the preparation of this paper.

\end{document}